\numberwithin{equation}{section}
\newtheorem{theorem}{Theorem}[section]
\newtheorem{proposition}[theorem]{Proposition}
\newtheorem{corollary}[theorem]{Corollary}
\newtheorem{lemma}[theorem]{Lemma}
\newtheorem{problem}[theorem]{Problem}
\theoremstyle{definition}
\newtheorem{claim}[theorem]{Claim}
\DeclareMathOperator\lk{\mathrm{lk}}
\DeclareMathOperator\st{\mathrm{st}}
\DeclareMathOperator{\dist}{\mathrm{dist}}
\newcommand{\field}{{\bf k}}
\newcommand{\R}{{\mathbb R}}
\title{The rigidity of the graphs of homology spheres minus one edge}
\author{Hailun Zheng\\
	\small Department of Mathematics \\[-0.8ex]
	\small University of Washington\\[-0.8ex]
	\small Seattle, WA 98195-4350, USA\\[-0.8ex]
	\small \texttt{hailunz@math.washington.edu}
}
\begin{document}
	\maketitle
	\begin{abstract}
		We prove that for any prime homology $(d-1)$-sphere $\Delta$ of dimension $d-1\geq 3$ and any edge $e\in S$, the graph $G(\Delta)-e$ is generically $d$-rigid. This confirms a conjecture of Nevo and Novinsky.
	\end{abstract}
\section{Introduction}
The main object of this paper is the notion of generic rigidity. We now briefly mention a few relevant definitions, defering the rest until  later sections. Recall that a $d$-\emph{embedding} of a graph $G=(V, E)$ is a map $\psi: V\to \mathbb{R}^d$. This embedding is called \emph{rigid} if there exists an $\epsilon>0$ such that if $\psi: V\to \R^d$ satisfies $\dist(\phi(u), \psi(u))<\epsilon$ for every $u\in V$ and $\dist(\psi(u), \psi(v))=\dist(\phi(u), \phi(v))$ for every $\{u,v\}\in E$, then $\dist(\psi(u), \psi(v))=\dist(\phi(u), \phi(v))$ for every $u,v\in V$. A graph $G$ is called \emph{generically $d$-rigid} if the set of rigid $d$-embeddings of $G$ is open and dense in the set of all $d$-embeddings of $G$. 

The first substantial mathematical result concerning rigidity can be dated back to 1813, when Cauchy proved that any bijection between the vertices of two
convex 3-polytopes that induces a combinatorial isomorphism and an isometry of the facets, induces an isometry of the two polytopes. Based on Cauchy's theorem and on later results by Dehn and Alexandrov, in 1975 Gluck \cite{G} gave a complete proof of the fact that the graphs of all simplicial 3-polytopes are generically 3-rigid. Later Whiteley \cite{W} extended this result to the graphs of simplicial $d$-polytopes for any $d\geq 3$. Many other generalizations have been made since, including, for example, the following theorem proved by Fogelsanger.
\begin{theorem}{\rm{\cite{F}}}\label{thm: rigidity}
	Let $d\geq 3$. The graph of a minimal $(d-1)$-cycle complex is generically $d$-rigid. In particular, the graphs of all homology $(d-1)$-spheres are generically $d$-rigid.
\end{theorem}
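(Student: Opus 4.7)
The plan is to prove the stronger statement about minimal $(d-1)$-cycle complexes, since the fundamental class of any homology $(d-1)$-sphere provides such a minimal cycle. The strategy is a double induction: the outer induction is on the number of facets in the complex supporting the cycle, and the inner induction is on the dimension $d$.

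For the base case of the outer induction, the smallest minimal $(d-1)$-cycle complex is the boundary of a $d$-simplex, whose graph is $K_{d+1}$; its generic $d$-rigidity is classical, since $d+1$ generic points in $\R^d$ are determined up to isometry by their pairwise distances.

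For the inductive step, I would try to locate a vertex $v$ of the supporting complex $\Delta$ at which the minimal $(d-1)$-cycle $c$ splits as $c = c_1 + c_2$, with each $c_i$ a minimal $(d-1)$-cycle supported on a complex $\Delta_i$ having strictly fewer facets than $\Delta$. The geometric input is that the link $\lk(v,\Delta)$ is itself a minimal $(d-2)$-cycle complex, and by the dimension induction it admits a splitting along some interior face; this splitting is lifted back to $\Delta$ by coning over the two pieces of $\lk(v,\Delta)$ together with the appropriate portions of $\st(v,\Delta)$. By the outer induction, each $G(\Delta_i)$ is generically $d$-rigid, and the two subcomplexes share $v$ together with the vertices of the separating link face, providing at least $d$ shared vertices in generic position. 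A standard rigid-gluing lemma, namely that the union of two generically $d$-rigid graphs sharing $\geq d$ vertices in general position is again generically $d$-rigid, then yields that $G(\Delta_1) \cup G(\Delta_2) = G(\Delta)$ is generically $d$-rigid.

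The principal obstacle is the decomposition lemma itself: given a minimal $(d-1)$-cycle that is not the boundary of a single simplex, one must produce a vertex $v$ and a splitting of $\lk(v,\Delta)$ into two minimal $(d-2)$-cycles whose coning realizes a genuine decomposition $c = c_1 + c_2$. Both the fact that $c_1$ and $c_2$ remain cycles (arising from careful bookkeeping with the boundary operator $\partial$) and the more delicate fact that they remain \emph{minimal}, which is not preserved under arbitrary chain decompositions, require the minimality of $c$ and the link structure in an essential way. Once this decomposition lemma is established, the rigidity induction outlined above goes through routinely.
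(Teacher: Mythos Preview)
The paper does not supply its own proof of this theorem; it is quoted from Fogelsanger's thesis \cite{F} and used as a black box. So there is nothing in the paper to compare your argument against directly, but your sketch can still be assessed on its own terms, and it has a genuine gap.

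The problematic step is the decomposition $c = c_1 + c_2$ at a vertex $v$. You propose to split the link $\lk(v,\Delta)$ into two pieces and cone back, but coning a piece of $\lk(v,\Delta)$ with $v$ produces a \emph{chain with boundary}, not a cycle; to close it up into a $(d-1)$-cycle you must add faces from $\Delta\setminus\st(v,\Delta)$, and there is no mechanism in your outline for choosing those faces so that the result is (a) a cycle, (b) minimal, and (c) supported on strictly fewer facets. You also invoke ``the dimension induction'' to obtain a splitting of $\lk(v,\Delta)$, but your inductive hypothesis is about generic rigidity, not about decomposability of cycles, so this step is circular as written. Finally, the overlap count is fragile: a separating ridge in the $(d-2)$-dimensional link has only $d-2$ vertices, giving $d-1$ shared vertices with $v$, one short of what the gluing lemma needs.

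Fogelsanger's actual argument is organized differently. He fixes a \emph{facet} $\sigma$ (not a vertex) and shows that any minimal $(d-1)$-cycle $c$ that is not $\partial\sigma$ can be expressed via minimal cycles $c_1,\dots,c_k$, each of strictly smaller support, each containing $\sigma$ in its support, and whose graphs together cover $G(\Delta)$ (possibly after a vertex-splitting move, which is where the link and the dimension induction genuinely enter). Because every $c_i$ contains the $d$ vertices of $\sigma$, the gluing lemma applies uniformly. The anchoring at a common facet is precisely what guarantees the $d$ shared vertices and what makes the minimality of the pieces provable; anchoring at a single vertex, as in your outline, does neither.
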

The rigidity theory of frameworks is a very useful tool for tackling the lower bound conjectures. For a $(d-1)$-dimensional simplicial complex $\Delta$, we define $g_2(\Delta):= f_1(\Delta)-df_0(\Delta)+\binom{d+1}{2}$, where $f_1$ and $f_0$ are the numbers of edges and vertices of $\Delta$, respectively. By interpreting $g_2(\Delta)$ as the dimension of the left kernel of the rigidity matrix of $\Delta$, Kalai \cite{K} proved that the $g_2$-number of an arbitrary triangulated manifold $\Delta$ of dimension at least three is nonnegative (thus reproving the Lower Bound Theorem due to Barnette \cite{B}, \cite{B2}). Furthermore, Kalai showed that $g_2(\Delta)=0$ is attained if and only if $\Delta$ is a stacked sphere. Kalai's theorem was then extended to the class of normal pseudomanifolds by Tay \cite{T}, where Theorem 1.1 served as a key ingredient in the proof. We refer to \cite{KN} for another application of the rigidity theory to the Balanced Lower Bound Theorem.

It might be tempting to conjecture that the graph of a non-stacked homology sphere $\Delta$ minus any edge of $\Delta$ is also generically $d$-rigid. This is not true in general; for example, let $\Delta$ be obtained by stacking over a facet of any $(d-1)$-sphere $\Gamma$, and let $e$ be any edge not in $\Gamma$. In this case the graph of $\Delta-e$ is  not generically $d$-rigid. However, Nevo and Novinsky \cite{NN} showed that this statement does hold if, in addition, one requires that $\Delta$ is \emph{prime} (i.e., $\Delta$ has no missing facets) and $g_2(\Delta)=1$. They raised the following question. 
\begin{problem}{\rm\cite[Problem 2.11]{NN}}
	Let $d\geq 4$ and let $\Delta$ be a prime homology $(d-1)$-sphere. Is it true that for any edge $e$ in $\Delta$, the graph $G(\Delta)-e$ is generically $d$-rigid?
\end{problem} 
In this paper we give an affirmative answer to the above problem. The proof is based on the rigidity theory of frameworks. Specifically, we first verify the base cases $d=4$ and $g_2=1$, and then prove the result by inducting on both the dimension and the value of $g_2$. 

The paper is organized as follows. In Section 2 after reviewing some preliminaries on simplicial complexes, we introduce the rigidity theory of frameworks and summerize several well-known results in this field. We then prove our main result (Theorem \ref{thm: rigidity, minus edge}) in Section 3.
\section{Preliminaries}
A \textit{simplicial complex} $\Delta$ on vertex set $V=V(\Delta)$ is a collection of subsets $\sigma\subseteq V$, called \textit{faces}, that is closed under inclusion, and such that for every $v \in V$, $\{v\} \in \Delta$. The \textit{dimension} of a face $\sigma$ is $\dim(\sigma)=|\sigma|-1$, and the \textit{dimension} of $\Delta$ is $\dim(\Delta) = \max\{\dim(\sigma):\sigma\in \Delta\}$. The \textit{facets} of $\Delta$ are maximal faces of $\Delta$ under inclusion. We say that a simplicial complex $\Delta$ is \textit{pure} if all of its facets have the same dimension. A \textit{missing} face of $\Delta$ is any subset $\sigma$ of $V(\Delta)$ such that $\sigma$ is not a face of $\Delta$ but every proper subset of $\sigma$ is. A pure simplicial complex $\Delta$ is \emph{prime} if it does not have any missing facets. 

For a simplicial complex $\Delta$, we denote the graph of $\Delta$ by $G(\Delta)$. If $G=(V,E)$ is a graph and $U\subseteq V$, then the \emph{restriction} of $G$ to $U$ is the subgraph $G|_U$ whose vertex set is $U$ and whose edge set consists of all of the edges in $E$ that have both endpoints in $U$. We denote by $C(G)$ the graph of the cone over a graph $G$, and by $K(V)$ the complete graph on the vertex set $V$. For brevity of notation, in the following we will use $G + e$ (resp. $G - e$) to denote the graph obtained by adding an edge $e$ to (resp. deleting $e$ from) $G$. 

In this paper we focus on the graphs of a certain class of simplicial complexes. Given an edge $e=\{a,b\}$ of a simplicial complex $\Delta$, the contraction of $e$ to a new vertex $v$ in $\Delta$ is the simplicial complex
\[\Delta^{\downarrow e}:=\{F\in \Delta: a, b\notin F\}\cup \{F\cup\{v\}: F\cap\{a,b\}=\emptyset \;\mathrm{and} \;\mathrm{either}\; F\cup\{a\}\in \Delta \;\mathrm{or}\; F\cup\{b\}\in\Delta\}.\] A simplicial complex $\Delta$ is a \emph{simplicial sphere} if the geometric realization of $\Delta$, denoted as $||\Delta||$, is homeomorphic to a sphere. Let $\tilde{H}_*(\Gamma,\field)$ denote the reduced singular homology of $||\Gamma||$ with coefficients in $\field$. The \textit{link} of a face $\sigma$ is $\lk_\Delta \sigma:=\{\tau-\sigma\in \Delta: \sigma\subseteq \tau\in \Delta\}$, and the \textit{star} of $\sigma$ is $\st_\Delta \sigma:= \{\tau \in\Delta: \sigma\cup\tau\in\Delta \}$. For a pure $(d-1)$-dimensional simplicial complex $\Delta$ and a field $\field$, we say that $\Delta$ is a homology sphere over $\field$ if $\tilde{H}_*(\lk_\Delta \sigma;\field)\cong \tilde{H}_*(\mathbb{S}^{d-1-|\sigma|};\field)$ for every face $\sigma\in\Delta$, including the empty face. We have the following inclusion relations:
\begin{center}
	boundary complexes of simplicial $d$-polytopes $\subseteq$ simplicial $(d-1)$-spheres
	
	\hspace{7.8cm} $\subseteq$ homology $(d-1)$-spheres.
\end{center}
It follows from Steinitz's theorem that when $d=3$, all three classes above coincide. When $d\geq 4$, all three inclusions are strict. 

We are now in a position to review basic definitions of rigidity theory of frameworks. Given a graph $G$ and a $d$-embedding $\phi$ of $G$, we define the matrix $\mathrm{Rig}(G, \phi)$ associated with a graph $G$ as follows: it is an $f_1(G) \times df_0(G)$ matrix with rows labeled by edges of $G$ and columns grouped in blocks of size $d$, with each block labeled by a vertex of $G$; the row corresponding to $\{u,v\}\in E$ contains the vector $\phi(u)-\phi(v)$ in the block of columns corresponding to $u$, the vector $\phi(v)-\phi(u)$ in columns corresponding to $v$, and zeros everywhere else. It is easy to see that for a generic $\phi$ the dimensions of the kernel and image of $\mathrm{Rig}(G, \phi)$ are independent of $\phi$. Hence we define the \emph{rigidity matrix} of $G$ as $\mathrm{Rig}(G, d)=\mathrm{Rig}(G, \phi)$ for a generic $\phi$. It follows from \cite{AR2} that $G$ is generically $d$-rigid if and only if $\mathrm{rank(}\mathrm{Rig}(G, d))=df_0(G)-\binom{d+1}{2}$. The following lemmas summerize a few additional results on framework rigidity.
\begin{lemma}[Cone Lemma, \cite{W}]
	$G$ is generically $(d-1)$-rigid if and only if $C(G)$ is generically $d$-rigid.
\end{lemma}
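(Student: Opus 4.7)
The plan is to reduce the lemma to a rank identity for the two rigidity matrices and then establish it via an explicit analysis of infinitesimal flexes. Set $n = f_0(G)$. Using the rank criterion recalled just before the statement together with the identity
\[
d(n+1) - \binom{d+1}{2} = n + \Bigl((d-1)n - \binom{d}{2}\Bigr),
\]
the lemma reduces to proving
\[
\mathrm{rank}\bigl(\mathrm{Rig}(C(G),d)\bigr) = n + \mathrm{rank}\bigl(\mathrm{Rig}(G,d-1)\bigr).
\]

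To prove this identity, I would work with a carefully chosen generic $d$-embedding $\psi$ of $C(G)$. Since the rigidity matrix rank is invariant under rigid motions, I may assume the cone apex $v_0$ satisfies $\psi(v_0) = (0,\ldots,0,1)$ and write $\psi(v) = (a_v, b_v) \in \mathbb{R}^{d-1} \times \mathbb{R}$ for each $v \in V(G)$, with $b_v \ne 1$ (a Zariski-generic condition). Restricting to infinitesimal motions $u$ satisfying $u(v_0) = 0$ accounts for the $d$-dimensional subspace of translations already present in $\ker \mathrm{Rig}(C(G),\psi)$. With this normalization, the cone-edge constraint for $\{v_0, v\}$ reads $(-a_v,\,1 - b_v)\cdot u(v) = 0$, and since $b_v \ne 1$ it solves for the last coordinate $u(v)_d$ in terms of the first $d-1$ coordinates $u(v)'$, imposing $n$ independent linear relations. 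Substituting this expression for $u(v)_d$ into the non-cone-edge constraint $(\psi(v_i) - \psi(v_j))\cdot(u(v_i) - u(v_j)) = 0$ for each $\{v_i, v_j\} \in E(G)$ and grouping terms by $u(v_i)'$ and $u(v_j)'$, a short algebraic computation shows that the coefficients factor as $(1 - b_j)(\tilde a_i - \tilde a_j)$ and $-(1 - b_i)(\tilde a_i - \tilde a_j)$, respectively, where $\tilde a_v := a_v/(1-b_v)$ is the central projection of $\psi(v)$ from $v_0$ onto the hyperplane $\{x_d = 0\}$. Setting $w_v := u(v)'/(1-b_v)$ then rewrites each such constraint as $(\tilde a_i - \tilde a_j)\cdot(w_i - w_j) = 0$, which is exactly the infinitesimal rigidity relation for $G$ at the $(d-1)$-embedding $\phi'(v) := \tilde a_v$.

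The map $u \mapsto w$ is therefore a linear bijection from $\{u \in \ker \mathrm{Rig}(C(G),\psi) : u(v_0) = 0\}$ onto $\ker \mathrm{Rig}(G,\phi')$, and so $\mathrm{rank}(\mathrm{Rig}(C(G),\psi)) = n + \mathrm{rank}(\mathrm{Rig}(G,\phi'))$. The rational map $\psi \mapsto \phi'$ is surjective onto the space of $(d-1)$-embeddings of $G$ (any $\phi'$ is realized by taking $b_v = 0$ and $a_v = \phi'(v)$), so a generic $\psi$ produces a generic $\phi'$; passing to the generic rank on both sides yields the displayed identity and completes the argument. The only step that requires genuine care is the algebraic factorization of the reduced edge-constraint coefficients into the form above; the rest is a dimension count together with the general upper bound $\mathrm{rank}(\mathrm{Rig}(H,d)) \le d\,f_0(H) - \binom{d+1}{2}$ coming from the infinitesimal rigid motions of $\mathbb{R}^d$ lying in $\ker \mathrm{Rig}(H,d)$.
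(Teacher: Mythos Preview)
The paper does not prove this lemma at all: it is quoted from Whiteley \cite{W} and used as a black box, so there is nothing to compare your argument against within the paper itself.

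Your argument is correct and is essentially the classical proof. The reduction to the rank identity
\[
\mathrm{rank}\bigl(\mathrm{Rig}(C(G),d)\bigr)=n+\mathrm{rank}\bigl(\mathrm{Rig}(G,d-1)\bigr)
\]
via the arithmetic identity $d(n+1)-\binom{d+1}{2}=n+\bigl((d-1)n-\binom{d}{2}\bigr)$ is exactly right, and since you establish an equality of ranks (not merely an inequality) the biconditional follows immediately without needing the general upper bound you mention at the end. The key computation checks: after imposing $u(v_0)=0$ and eliminating each $u(v)_d$ via the cone-edge constraint, the coefficient of $u(v_i)'$ in the edge relation for $\{v_i,v_j\}$ simplifies to
\[
(a_i-a_j)+\frac{(b_i-b_j)a_i}{1-b_i}=\frac{a_i(1-b_j)-a_j(1-b_i)}{1-b_i}=(1-b_j)(\tilde a_i-\tilde a_j),
\]
and symmetrically for $u(v_j)'$, so the substitution $w_v=u(v)'/(1-b_v)$ indeed turns each relation into $(\tilde a_i-\tilde a_j)\cdot(w_i-w_j)=0$. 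The dimension count $\dim\{u\in\ker:u(v_0)=0\}=\dim\ker-d$ is justified since the evaluation $u\mapsto u(v_0)$ is surjective from $\ker$ onto $\R^d$ (translations witness this). Finally, your genericity argument is sound: the central projection $\psi\mapsto\phi'$ is a dominant rational map, so the preimage of the Zariski-open locus where $\mathrm{Rig}(G,\phi')$ has generic rank is itself Zariski-open and dense, and intersects the locus where $\mathrm{Rig}(C(G),\psi)$ has generic rank.

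This is, up to notation, Whiteley's original coning argument: the correspondence you set up is precisely that infinitesimal flexes of the cone in $\R^d$ fixing the apex correspond, via central projection from the apex onto a hyperplane, to infinitesimal flexes of the base in $\R^{d-1}$.
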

 Since the star of any face $\sigma$ in a homology sphere is the join of $\sigma$ with the link of $\sigma$, and since the link of $\sigma$ is a homology sphere, Theorem \ref{thm: rigidity} along with the cone lemma implies the following corollary. 
\begin{corollary}\label{cor: star of face}
	Let $d\geq 4$ and let $\Delta$ be a homology $(d-1)$-sphere. Then the graph of $\st_\Delta\sigma$ is generically $d$-rigid for any face $\sigma$ with $|\sigma|\leq d-3$.
\end{corollary}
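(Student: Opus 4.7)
The plan is to realize $\st_\Delta \sigma$ as an iterated cone over $\lk_\Delta\sigma$ and then invoke Theorem~\ref{thm: rigidity}. For any face $\sigma$ of a simplicial complex we have the identity $\st_\Delta \sigma = \sigma \ast \lk_\Delta \sigma$, where $\sigma$ is viewed as the full simplex on its vertex set. In particular, the graph $G(\st_\Delta \sigma)$ is obtained from $G(\lk_\Delta \sigma)$ by adjoining the $|\sigma|$ vertices of $\sigma$, each adjacent to every other vertex of $\st_\Delta \sigma$. Equivalently, $G(\st_\Delta \sigma) = C^{|\sigma|}\bigl(G(\lk_\Delta\sigma)\bigr)$, the $|\sigma|$-fold iterated cone.

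Since $\Delta$ is a homology $(d-1)$-sphere, a routine consequence of the definition (using that $\lk_{\lk_\Delta\sigma}\tau = \lk_\Delta(\sigma\cup\tau)$) gives that $\lk_\Delta \sigma$ is itself a homology $(d-1-|\sigma|)$-sphere. The hypothesis $|\sigma|\leq d-3$ rewrites as $d-|\sigma|\geq 3$, so Theorem~\ref{thm: rigidity} applies to $\lk_\Delta \sigma$ and guarantees that $G(\lk_\Delta \sigma)$ is generically $(d-|\sigma|)$-rigid. A straightforward induction on $|\sigma|$, applying the Cone Lemma at each step, then promotes this to generic $d$-rigidity of $C^{|\sigma|}\bigl(G(\lk_\Delta\sigma)\bigr) = G(\st_\Delta\sigma)$.

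No serious obstacle arises here; the argument is essentially bookkeeping. The only point to watch is the dimensional constraint: Theorem~\ref{thm: rigidity} must be applied with ambient dimension at least $3$, i.e.\ to a complex of dimension at least $2$, which translates to $d-1-|\sigma|\geq 2$. This is precisely the hypothesis $|\sigma|\leq d-3$ built into the statement.
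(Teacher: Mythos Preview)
Your argument is correct and matches the paper's own reasoning exactly: write $\st_\Delta\sigma=\sigma*\lk_\Delta\sigma$, apply Theorem~\ref{thm: rigidity} to the homology $(d-1-|\sigma|)$-sphere $\lk_\Delta\sigma$ (using $|\sigma|\le d-3$), and then iterate the Cone Lemma $|\sigma|$ times.
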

\begin{lemma}[Gluing Lemma, \cite{AR2}]
	Let $G_1$ and $G_2$ be generically $d$-rigid graphs such that $G_1\cap G_2$ has at least $d$ vertices. Then $G_1\cup G_2$ is also generically $d$-rigid.
\end{lemma}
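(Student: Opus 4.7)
The plan is to translate everything into infinitesimal rigidity and reduce to a linear-algebraic uniqueness statement about rigid motions of $\R^d$. Recall that a graph $H$ is generically $d$-rigid if and only if for a generic embedding $\phi$ the kernel of $\mathrm{Rig}(H,\phi)$ has dimension exactly $\binom{d+1}{2}$; this kernel always contains the \emph{trivial} infinitesimal motions, i.e., velocity assignments of the form $v_i = a + S\phi(i)$ where $a\in\R^d$ and $S$ is a skew-symmetric $d\times d$ matrix. So it is enough to prove that for a generic $d$-embedding $\phi$ of $G_1\cup G_2$, every infinitesimal motion $v$ in $\ker\mathrm{Rig}(G_1\cup G_2,\phi)$ is of this trivial form.

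First I would fix such a generic $\phi$ and observe that its restrictions $\phi|_{V(G_i)}$ are themselves generic, so by the hypothesis that $G_i$ is generically $d$-rigid the restriction $v|_{V(G_i)}$ of any infinitesimal motion $v$ of $G_1\cup G_2$ is trivial: there exist $a_i\in\R^d$ and skew-symmetric $S_i$ with $v_j = a_i+S_i\phi(j)$ for every $j\in V(G_i)$. On the overlap $V(G_1)\cap V(G_2)$ the two expressions must coincide, hence $(a_1-a_2) + (S_1-S_2)\phi(j) = 0$ at each of these $\geq d$ shared vertices.

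The heart of the argument is the following linear-algebraic fact: if $x_1,\ldots,x_d$ are affinely independent points of $\R^d$ and a pair $(a,S)$ with $a\in\R^d$ and $S$ skew-symmetric satisfies $a + Sx_i = 0$ for all $i$, then $(a,S) = (0,0)$. Subtracting the $i=1$ equation from the others shows that $S$ annihilates the $(d-1)$-dimensional subspace $W = \mathrm{span}(x_2-x_1,\ldots,x_d-x_1)$; for any nonzero $u$ orthogonal to $W$, skew-symmetry of $S$ gives $\langle Su,w\rangle = -\langle u,Sw\rangle = 0$ for every $w\in W$ and $\langle Su,u\rangle = 0$, so $Su = 0$ and hence $S=0$ and then $a=0$. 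Applied to the pair $(a_1-a_2, S_1-S_2)$ at $d$ affinely independent shared images (which exist for generic $\phi$ because $|V(G_1)\cap V(G_2)|\geq d$), this yields $(a_1,S_1) = (a_2,S_2)$, so $v$ is a single trivial motion on $V(G_1)\cup V(G_2)$, as required.

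The only technical point I expect to address is exhibiting a single generic embedding of $G_1\cup G_2$ that simultaneously restricts to a generic embedding of each $G_i$ and puts $d$ of the shared vertices into affine general position. This is routine because each such condition is cut out by the nonvanishing of finitely many polynomials on the total embedding space, so the intersection of their satisfying loci remains open and dense; there is no combinatorial obstacle beyond this, since the entire argument is local to the shared vertex set and the skew-symmetry of $S$.
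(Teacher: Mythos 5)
The paper does not prove this lemma; it is quoted from Asimow--Roth \cite{AR2} as a known result, so there is no internal proof to compare against. Your argument is the standard one for the gluing lemma --- reduce to infinitesimal rigidity at a common generic embedding, note that the restriction of an infinitesimal motion to each $V(G_i)$ is a trivial motion $v_j=a_i+S_i\phi(j)$, and use the fact that a pair $(a,S)$ with $S$ skew-symmetric vanishing at $d$ affinely independent points must be zero, so the two trivial motions agree --- and it is correct as written, including the key linear-algebra step that explains why $d$ (rather than $d+1$) shared vertices suffice.
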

\begin{lemma}[Replacement Lemma, \cite{K}]
	Let $G$ be a graph and $U$ a subset of $V(G)$. If both $G|_U$ and $G\cup K(U)$ are generically $d$-rigid, then $G$ is generically $d$-rigid.
\end{lemma}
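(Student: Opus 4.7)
The plan is to show that, at the level of infinitesimal motions, adding the missing edges of $K(U)$ to $G$ imposes no new constraints, so the generic $d$-rigidity of $G\cup K(U)$ transfers to $G$. Fix a generic $d$-embedding $\phi:V(G)\to\R^d$ and, for any spanning subgraph $H$ of $G\cup K(U)$, write $Z(H):=\ker\mathrm{Rig}(H,\phi)\subseteq \R^{df_0(G)}$; similarly write $Z(G|_U):=\ker\mathrm{Rig}(G|_U,\phi|_U)\subseteq \R^{d|U|}$. Since adding edges can only shrink the kernel, $Z(G\cup K(U))\subseteq Z(G)$, and it suffices to establish the reverse inclusion, as this forces $\dim Z(G)=\dim Z(G\cup K(U))=\binom{d+1}{2}$, which is precisely the condition that $G$ is generically $d$-rigid.

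So let $m\in Z(G)$. Its restriction $m|_U$ lies in $Z(G|_U)$ with respect to $\phi|_U$, and the latter embedding is still generic because its coordinates form a subset of the algebraically independent coordinates of $\phi$. By the assumed generic $d$-rigidity of $G|_U$, $m|_U$ is a trivial infinitesimal motion: there exist a skew-symmetric matrix $A\in\R^{d\times d}$ and a vector $b\in\R^d$ with $m(u)=A\phi(u)+b$ for every $u\in U$. Extending this formula to all of $V(G)$ produces a trivial infinitesimal motion $\tilde T:V(G)\to\R^d$, $\tilde T(v):=A\phi(v)+b$, which lies in $Z(H)$ for every spanning subgraph $H\subseteq G\cup K(U)$. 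In particular $m-\tilde T$ still belongs to $Z(G)$, and by construction $(m-\tilde T)|_U\equiv 0$.

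It then remains to verify that $m-\tilde T\in Z(G\cup K(U))$: the constraints coming from edges of $G$ hold since $m-\tilde T\in Z(G)$, while for any edge $\{u,v\}\in K(U)$ both endpoint velocities $(m-\tilde T)(u)$ and $(m-\tilde T)(v)$ vanish, so the constraint $(\phi(u)-\phi(v))\cdot\bigl((m-\tilde T)(u)-(m-\tilde T)(v)\bigr)=0$ is automatic. Adding back $\tilde T\in Z(G\cup K(U))$ yields $m=(m-\tilde T)+\tilde T\in Z(G\cup K(U))$, which is the reverse inclusion we wanted.

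The argument is essentially a three-line dimension chase once the infinitesimal viewpoint is adopted, so there is no serious obstacle. The only point requiring care is the transfer of genericity from $\phi$ to $\phi|_U$ in the second paragraph; this is immediate under the standard algebraic-independence formulation of ``generic,'' and ensures that the hypothesized generic $d$-rigidity of $G|_U$ genuinely applies to the restricted embedding used in the proof.
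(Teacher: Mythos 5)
Your argument is correct, and it is essentially the standard proof of this lemma (the paper itself states the Replacement Lemma only with a citation to \cite{K} and supplies no proof, so there is no internal argument to compare against): you show that at a generic embedding every infinitesimal motion of $G$ restricts on $U$ to a trivial motion, hence after subtracting that trivial motion it automatically satisfies the constraints of $K(U)$, giving $\ker\mathrm{Rig}(G,\phi)=\ker\mathrm{Rig}(G\cup K(U),\phi)$. The one point you should make explicit is that the hypothesis ``$G|_U$ is generically $d$-rigid'' is being used in the form ``$\ker\mathrm{Rig}(G|_U,\phi|_U)$ equals the space of restricted trivial motions,'' which is the Asimow--Roth equivalence at a generic embedding and, via the paper's rank criterion, implicitly assumes $|U|\geq d$; this holds in every application in the paper.
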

Finally we state a variation of the gluing lemma.
\begin{lemma}\label{lm: replacement lemma}
	Let $G_1$ and $G_2$ be two graphs, and assume that $a, b\in U=V(G_1\cap G_2)$. Assume further that $G_1$ and $G_2$ satisfy the following conditions: 1) the set $U$ contains at least $d$ vertices, including $a$ and $b$, 2) both $G_1$ and $G_2+\{a,b\}$ are generically $d$-rigid, and 3) $G_1|_U= G_2|_U$. Then $G_1\cup G_2$ is also generically $d$-rigid.
\end{lemma}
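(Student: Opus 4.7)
The plan is to reduce the claim to one application of the Gluing Lemma followed by one application of the Replacement Lemma. First, I would apply the Gluing Lemma to $G_1$ and $G_2+\{a,b\}$: both graphs are generically $d$-rigid by hypothesis~(2), and their intersection has vertex set $V(G_1\cap G_2)=U$, which contains at least $d$ vertices by hypothesis~(1). The Gluing Lemma then yields that
\[
G_1\cup (G_2+\{a,b\}) \;=\; (G_1\cup G_2)+\{a,b\}
\]
is generically $d$-rigid.

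Next, I would set $G:=G_1\cup G_2$ and apply the Replacement Lemma with the subset $W:=V(G_1)\subseteq V(G)$. The two hypotheses of that lemma need to be verified. For the first, I claim $G|_W=G_1$: any edge of $G$ whose endpoints both lie in $V(G_1)$ either comes from $G_1$, or comes from $G_2$ and hence has both endpoints in $V(G_1)\cap V(G_2)=U$; in the latter case the edge lies in $G_2|_U$, which equals $G_1|_U$ by hypothesis~(3), hence in $G_1$. Thus $G|_W=G_1$ is generically $d$-rigid. For the second, since $a,b\in U\subseteq W$, the edge $\{a,b\}$ lies in $K(W)$, so
\[
G\cup K(W) \;\supseteq\; (G_1\cup G_2)+\{a,b\},
\]
and both graphs have the same vertex set $V(G)$. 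Since adding edges to a generically $d$-rigid graph preserves generic $d$-rigidity (the rigidity matrix only gains rows, and its rank is already maximal), the first step gives that $G\cup K(W)$ is generically $d$-rigid. The Replacement Lemma then produces that $G=G_1\cup G_2$ is generically $d$-rigid.

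The main obstacle, such as it is, lies in pinning down the identity $G|_{V(G_1)}=G_1$ from hypothesis~(3); without $G_1|_U=G_2|_U$ the intersection $U$ might carry edges of $G_2$ that are invisible to $G_1$, and the Replacement Lemma would no longer apply with $W=V(G_1)$. Once this identity is in place, the rest is a bookkeeping exercise combining the two named lemmas.
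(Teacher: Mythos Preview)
Your proof is correct and follows essentially the same route as the paper: apply the Gluing Lemma to obtain that $(G_1\cup G_2)+\{a,b\}$ is generically $d$-rigid, then invoke the Replacement Lemma on $W=V(G_1)$, using condition~(3) to identify $G|_W$ with $G_1$. The only cosmetic difference is that the paper glues $G_1+\{a,b\}$ with $G_2+\{a,b\}$ rather than $G_1$ with $G_2+\{a,b\}$, and phrases the replacement step more informally; your verification of the two hypotheses of the Replacement Lemma is if anything more explicit.
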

\begin{proof}
	The second condition implies that $G_1+\{a,b\}$ is generically $d$-rigid. Since $G_i+\{a,b\}$ are generically $d$-rigid for $i=1,2$ and their intersection contains at least $d$ vertices, by the gluing lemma, $G:=(G_1\cup G_2) + \{a,b\}$ is generically $d$-rigid. Note that by condition 3), the restriction of $G$ to $V(G_1)$ is $G_1+\{a,b\}$. Replacing $G_1+ \{a,b\}$ by the generically $d$-rigid graph $G_1$, we obtain the graph $G_1\cup G_2$, which is also generically $d$-rigid by the replacement lemma. 
\end{proof}

\section{Proof of the main theorem}
In this section we will prove our main result, Theorem 3.4. We begin with the following lemma that is originally due to Kalai. We give a proof here for the sake of completeness.
\begin{lemma}\label{lm: edge in a missing face}
	Let $d\geq 4$ and let $\Delta$ be a homology $(d-1)$-sphere. If $\sigma$ is a missing $k$-face in $\Delta$ and $2\leq k\leq d-2$, then $G(\Delta)-e$ is generically $d$-rigid for any edge $e\subseteq \sigma$.
\end{lemma}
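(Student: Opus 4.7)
The plan is to apply the Replacement Lemma to the graph $G := G(\Delta) - e$ with the subset $U := V(\st_\Delta \tau)$, where $\tau := \sigma \setminus \{a,b\}$.

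First, I would verify that $G|_U$ is generically $d$-rigid. Since $\sigma = \tau \cup \{a,b\}$ is missing from $\Delta$, the edge $e$ is not a face of $\st_\Delta \tau$, and so $G(\st_\Delta \tau)$ is a spanning subgraph of $G|_U$. Because $2 \leq k \leq d-2$, we have $1 \leq |\tau| = k-1 \leq d-3$, and Corollary~\ref{cor: star of face} then gives that $G(\st_\Delta \tau)$ is generically $d$-rigid. Hence so is $G|_U$.

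Next, I would verify that $G \cup K(U)$ is generically $d$-rigid. Both endpoints of $e$ lie in $\sigma \subseteq U$, so $e \in K(U)$; therefore $G \cup K(U)$ contains $G(\Delta)$ as a spanning subgraph, and $G(\Delta)$ is generically $d$-rigid by Theorem~\ref{thm: rigidity}.

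With both hypotheses in hand, the Replacement Lemma immediately yields that $G(\Delta) - e$ is generically $d$-rigid. There is no substantial obstacle; the proof is a direct application of the Replacement Lemma. The one conceptual observation is that the missing-face hypothesis does double duty: $\sigma \notin \Delta$ ensures that $e \notin \st_\Delta \tau$, while $|\sigma| = k+1 \leq d-1$ keeps $|\tau| \leq d-3$, which is exactly the range in which Corollary~\ref{cor: star of face} can produce a $d$-rigid spanning subgraph of $G|_U$.
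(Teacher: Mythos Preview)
Your proof is correct and follows essentially the same approach as the paper: set $\tau=\sigma\setminus e$, use Corollary~\ref{cor: star of face} to get $G(\st_\Delta\tau)$ generically $d$-rigid (noting $e\notin\st_\Delta\tau$ since $\sigma$ is missing), and then apply the Replacement Lemma with $U=V(\st_\Delta\tau)$. Your write-up is in fact slightly more explicit than the paper's in checking both hypotheses of the Replacement Lemma.
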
		
\begin{proof}
	Let $\tau=\sigma\backslash e$. The dimension of $\lk_\Delta \tau$ is
		\[\dim \lk_\Delta \tau=d-1-|\tau|=(d-1)-(|\sigma|-2)\geq d+1-(d-1)=2,\]
	so $\lk_\Delta \tau$ is generically $(d-|\tau|)$-rigid. By Corollary \ref{cor: star of face}, $\st_\Delta \tau$ is generically $d$-rigid. Note that $e\notin \st_\Delta \tau$, and the induced subgraph of $G(\Delta)$ on $W=V(\st_\Delta\tau)$ contains a generically $d$-rigid subgraph $G(\st_\Delta \tau)$. Applying the replacement lemma on $W$ (that is, replacing $G(\Delta)|_{W}$ with $G(\Delta)|_W -e$), we conclude that the resulting graph $G(\Delta)-e$ is also generically $d$-rigid.
\end{proof}
	
The following proposition was mentioned in \cite{NN} without a proof.					
\begin{proposition}\label{lm: edge stress g2=1}
	Let $\Delta$ be a prime homology $(d-1)$-sphere with $g_2(\Delta)=1$, where $d\geq 4$. Then for any edge $e\in \Delta$, the graph $G(\Delta)-e$ is generically $d$-rigid.
\end{proposition}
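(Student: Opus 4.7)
The plan is to perform a case analysis on the edge $e=\{a,b\}$. First, if $e$ is contained in some missing face $\sigma$ of $\Delta$ with $|\sigma|\geq 3$, then primality of $\Delta$ forces $|\sigma|\leq d-1$, so $2\leq\dim\sigma\leq d-2$, and Lemma~\ref{lm: edge in a missing face} immediately yields that $G(\Delta)-e$ is generically $d$-rigid. Thus the only remaining case is the \emph{cross-edge case}, in which $e$ lies in no missing face of dimension at least two.

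To handle the cross-edge case, my plan is to invoke a structural description of prime homology $(d-1)$-spheres with $g_2=1$: such a $\Delta$ must be combinatorially isomorphic to a join $\partial\Delta^{k_1}*\partial\Delta^{k_2}$, where $k_1,k_2\geq 2$ and $k_1+k_2=d$. Consequently $\Delta$ has exactly $d+2$ vertices, partitioned into two disjoint sets $V_1,V_2$ of sizes $k_1+1$ and $k_2+1$; the graph $G(\Delta)$ is the complete graph on $V:=V_1\cup V_2$; and the only missing faces of $\Delta$ are $V_1$ and $V_2$, both of dimension at least two. In particular, a cross edge $e$ must satisfy $a\in V_1$ and $b\in V_2$.

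Given this structural input, the cross-edge case reduces to a clean application of the gluing lemma. Set $G_1:=K(V\setminus\{a\})$ and $G_2:=K(V\setminus\{b\})$; each is a complete graph on $d+1$ vertices, hence generically $d$-rigid, and neither contains $e$. Their intersection is $K(V\setminus\{a,b\})$, with $d$ vertices, so the gluing lemma applies and yields that $G_1\cup G_2$ is generically $d$-rigid. Since $G_1\cup G_2$ consists of every pair of vertices of $V$ other than $\{a,b\}$, we have $G_1\cup G_2=G(\Delta)-e$, completing the argument.

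The main obstacle in executing this plan is the structural characterization of prime homology $(d-1)$-spheres with $g_2=1$. For simplicial polytopes it is due to Nevo and Novinsky; for homology spheres an independent argument is required. One natural approach uses the identity
\[
\sum_{v\in V(\Delta)} g_2(\lk_\Delta v) \;=\; 3f_2(\Delta) - 2(d-1)f_1(\Delta) + f_0(\Delta)\binom{d}{2},
\]
combined with the Dehn--Sommerville relations and the nonnegativity of $g_2$ on each vertex link. For $d=4$ the right-hand side vanishes, forcing every vertex link to be a stacked $2$-sphere, from which the join structure of $\Delta$ can be pinned down combinatorially; for $d\geq 5$ a further inductive step on dimension is likely needed to force $f_0(\Delta)=d+2$ and hence $G(\Delta)=K_{d+2}$.
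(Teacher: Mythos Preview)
Your case analysis is clean, and your gluing argument for the cross-edge case is correct \emph{once} the structural claim holds. The gap is precisely in that structural claim: it is not true that every prime homology $(d-1)$-sphere with $g_2=1$ is of the form $\partial\Delta^{k_1}*\partial\Delta^{k_2}$. The Nevo--Novinsky classification (which the paper quotes as Theorem~1.3 of \cite{NN}) also allows the join $\partial\Delta^{d-2}*C_k$, where $C_k$ is a cycle on $k\geq 4$ vertices. Such a $\Delta$ is prime (its only missing faces are the $(d-2)$-face $V_1=V(\partial\Delta^{d-2})$ and the chords of $C_k$, none of which is a facet), and it has $g_2=1$ but $f_0=d+k-1>d+2$. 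Now take $e$ to be any cycle edge, say $e=\{c_1,c_2\}$. Then $e$ lies in no missing face of dimension~$\geq 2$, so your first case does not apply; but $G(\Delta)$ is \emph{not} the complete graph, so your decomposition $G(\Delta)-e=K(V\setminus\{a\})\cup K(V\setminus\{b\})$ breaks down and the cross-edge argument does not go through.

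The paper handles this by invoking the full Nevo--Novinsky description (including the cycle case) and, when $e$ meets the factor $\sigma_2$, by a vertex-deletion argument: delete a vertex $v\in e\cap\sigma_2$, observe that $\sigma_2\setminus\{v\}$ is a simplex or a path, build $G(\Delta\setminus\{v\})$ as a union of complete graphs on $d+1$ vertices (so generically $d$-rigid by the gluing lemma), and finally re-attach $v$ together with $\deg v-1\geq d$ of its edges. Your approach would need a comparable treatment of the cycle case to close the gap. As a secondary point, the extension of the classification from simplicial polytopes to homology spheres is taken as input by the paper; your sketch toward proving it directly is reasonable but not needed for the present argument.
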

\begin{proof}
	By Theorem 1.3 in \cite{NN}, $\partial \Delta= \sigma_1 * \sigma_2$, where $\sigma_1$ is the boundary complex of an $i$-simplex for some $i\geq \frac{d+1}{2}$, and $\sigma_2$ is either the boundary complex of a $(d+1-i)$-simplex, or a cycle graph $(c_1,\dots, c_k)$ when $i=d-2$. If $e\in \sigma_1$, then $G(\Delta)-e$ is generically $d$-rigid by Lemma \ref{lm: edge in a missing face}. Now assume that $e$ contains a vertex $v$ in $\sigma_2$. Note that $\sigma_2\backslash \{v\}$ is either a simplex or a path graph. In the former case, the graph of $\Delta\backslash\{v\}$ is the complete graph on $d+1$ vertices, and hence it is generically $d$-rigid. In the latter case, since the graph of $\sigma_1 *\{c_i,c_{i+1}\}$ is also the complete graph on $d+1$ vertices, by the gluing lemma, $G(\Delta \backslash\{v\})$ is generically $d$-rigid. Finally, the graph $G(\Delta)-e$ is obtained by adding to $G(\Delta\backslash\{v\})$ the vertex $v$ and $\deg v-1\geq d$ edges containing $v$. Hence $G(\Delta)-e$ is generically $d$-rigid.
\end{proof}
				
\begin{proposition}\label{lm: edge stress d=4}
	Let $\Delta$ be a prime homology 3-sphere. For any edge $e\in \Delta$, the graph $G(\Delta)-e$ is generically 4-rigid.
\end{proposition}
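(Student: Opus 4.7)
The plan is to split into two cases based on whether the edge $e = \{a, b\}$ is contained in a missing 2-face of $\Delta$. If $e$ lies in some missing 2-face, then Lemma~\ref{lm: edge in a missing face} applied with $k = 2$ (which satisfies the required range $2 \le k \le d - 2 = 2$) immediately yields that $G(\Delta) - e$ is generically 4-rigid.

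Otherwise $e$ lies in no missing 2-face, and my approach is via edge contraction. Combined with the primality of $\Delta$ (no missing 3-face), this hypothesis forces the link condition $\lk_\Delta a \cap \lk_\Delta b = \lk_\Delta e$ (up to a subtlety at the 2-face level discussed below), so the contracted complex $\Delta' := \Delta^{\downarrow e}$ is again a homology 3-sphere and Theorem~\ref{thm: rigidity} makes $G(\Delta')$ generically 4-rigid. I then plan to recover the rigidity of $G(\Delta) - e$ by reversing the contraction: identify the contracted vertex with $a$, and adjoin a fresh vertex $b$ joined to each element of $V(\lk_\Delta b) \setminus \{a\}$ to form an auxiliary graph $G_0$. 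When $|V(\lk_\Delta b)| \ge 5$ the vertex $b$ has at least four neighbors, so $G_0$ is obtained from $G(\Delta')$ by a $0$-extension and is generically 4-rigid. Finally, strip the ``extra'' edges $\{a, p\}$ (for $p \in V(\lk_\Delta b) \setminus (V(\lk_\Delta a) \cup \{a\})$) from $G_0$ one at a time by applying Lemma~\ref{lm: replacement lemma} with $U = V(\st_\Delta b)$, arriving at $G(\Delta) - e$.

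Two main obstacles need to be addressed. First, the link condition can genuinely fail at the 2-face level: if $\{v, w, x\}$ is a 2-face of $\Delta$ whose link is the 0-sphere $\{a, b\}$, then $\{v, w, x\} \in \lk_\Delta a \cap \lk_\Delta b$ but $\{v, w, x\} \notin \lk_\Delta e$ (since $\{a, b\} \cup \{v, w, x\}$ would be a 4-face of the 3-dimensional complex $\Delta$), so $\Delta'$ is not a homology 3-sphere. A short analysis invoking primality together with the triangle-level hypothesis shows that such a ``bad'' 2-face forces $\Delta$ to contain $\partial \Delta^4$ on $\{a, b, v, w, x\}$ as a subcomplex; since we may assume $\Delta \neq \partial \Delta^4$ (equivalently $g_2(\Delta) \ge 1$, for otherwise $G(\Delta) - e$ has too few edges to be generically 4-rigid), there is a vertex $y \in V(\Delta) \setminus \{a, b, v, w, x\}$ available as an auxiliary anchor, and a dedicated gluing argument using the Gluing Lemma finishes the case. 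Second, the $0$-extension step breaks in the corner case $|V(\lk_\Delta b)| = 4$ (so $\lk_\Delta b = \partial \Delta^3$), where $b$ acquires only $|V(\lk_\Delta e)| = 3$ neighbors; this subcase may be handled by swapping the roles of $a$ and $b$, unless the symmetric scenario $|V(\lk_\Delta a)| = 4$ also holds, in which case $\Delta$ again exhibits a $\partial \Delta^4$-substructure reducible to the previous obstacle.
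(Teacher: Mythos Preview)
Your overall case split (edge in a missing $2$-face vs.\ not) matches the paper, and your instinct to pass through the contraction $\Delta^{\downarrow e}$ is sound. The gap is in the final ``stripping'' step. Your $G_0$ equals $(G(\Delta)-e)\cup\{\{a,p\}:p\in V(\lk_\Delta b)\setminus(V(\lk_\Delta a)\cup\{a\})\}$, and you propose to delete each extra edge $\{a,p\}$ via Lemma~\ref{lm: replacement lemma} with $U=V(\st_\Delta b)$. But in that lemma $U=V(G_1)\cap V(G_2)$ and the condition $G_1|_U=G_2|_U$ forces $G_1=(G_0-\{a,p\})|_{U}$; after the last deletion this is $(G(\Delta)-e)|_{V(\st_\Delta b)}$. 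The only canonical generically $4$-rigid subgraph on that vertex set is $G(\st_\Delta b)$, which is \emph{minimally} $4$-rigid (a cone over a $2$-sphere has exactly $4f_0-10$ edges) and contains $e$; hence $G(\st_\Delta b)-e$ is not $4$-rigid, and there is no reason the induced subgraph $(G(\Delta)-e)|_{V(\st_\Delta b)}$ should be either. So Lemma~\ref{lm: replacement lemma} does not apply, and the passage from $G_0$ to $G(\Delta)-e$ is unjustified.

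Two side remarks. First, your ``bad $2$-face'' and $|V(\lk_\Delta b)|=4$ worries are vacuous: once you have the link condition at the edge level, a $2$-face $\tau\in\lk_\Delta a\cap\lk_\Delta b$ gives $\partial(\tau\cup\{b\})\subseteq\lk_\Delta a$, forcing $\lk_\Delta a=\partial\Delta^3$ and hence (by primeness and the pseudomanifold property) $\Delta=\partial\Delta^4$; likewise $|V(\lk_\Delta b)|=4$ already contradicts primeness unless $\Delta=\partial\Delta^4$. So no ``dedicated gluing argument'' is needed. Second, the paper circumvents the stripping problem entirely: when $|V(\lk_\Delta e)|=3$ it performs a bistellar flip producing a sphere $\Gamma$ with $G(\Gamma)=G(\Delta)-e$, and when $|V(\lk_\Delta e)|\ge 4$ it places $a$ and $b$ at the same point and row-reduces $\mathrm{Rig}(G(\Delta)-e)$ to a block-triangular form with $\mathrm{Rig}(G(\Delta^{\downarrow e}))$ in one block and an $|V(\lk_\Delta e)|\times 4$ full-rank block in the other, giving rank $4f_0(\Delta)-10$ directly. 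That matrix computation is precisely the ``vertex split without the connecting edge'' statement your $0$-extension-plus-stripping was aiming for.
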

\begin{proof}
	The proof has a similar flavor to the proof of Proposition 1 in \cite{W2}. If $e$ is an edge in a missing 2-face of $\Delta$, then by Lemma \ref{lm: edge in a missing face}, $G(\Delta)-e$ is generically 4-rigid. Now assume that $e=\{a,b\}$ does not belong to any missing 2-face of $\Delta$. We claim that $\lk_\Delta e=\lk_\Delta a \cap \lk_\Delta b$. If $v\in \lk_\Delta a \cap \lk_\Delta b$, then $e=\{a,b\}$, $\{a,v\}$ and $\{b,v\}$ are edges of $\Delta$. Hence, by our assumption, $\{a,b,v\}\in \Delta$, and so $v\in \lk_\Delta e$. Also if $e'=\{c,d\}\in \lk_\Delta a \cap \lk_\Delta b$, then $e'*\partial e\subseteq \Delta$. Since $e$ does not belong to any missing 2-face of $\Delta$, it follows that $c, d \in \lk_\Delta e$. Hence $(e'*\partial e)\cup(e*\partial e')\subseteq \Delta$, which by the primeness of $\Delta$ implies that $e*e'\subseteq \Delta$, i.e., $e'\in \lk_\Delta e$. Finally, if $\lk_\Delta a\cap \lk_\Delta b$ contains a 2-dimensional face $\tau$ whose boundary edges are $e_1,e_2$ and $e_3$, then the above argument implies that $e_i\cup\{b\}\in \lk_\Delta a$ for $i=1,2,3$.  Hence $\partial (\tau\cup\{b\})\subseteq\lk_\Delta a$, and so $\lk_\Delta a$ is the boundary complex of a 3-simplex. This contradicts the fact that $\Delta$ is prime. We conclude that both $\lk_\Delta e$ and $\lk_\Delta a\cap \lk_\Delta b$ are 1-dimensional. Furthermore, $\lk_\Delta a\cap \lk_\Delta b\subseteq \lk_\Delta e$. However, it is obvious that the reverse inclusion also holds. This proves the claim.
			
	If $\lk_\Delta e$ is a 3-cycle, then the filled-in triangle $\tau$ determined by $\lk_\Delta e$ is not a face of $\Delta$. Otherwise, by the fact that $\tau\cup (\lk_\Delta e *\{a\})$ and $\tau\cup (\lk_\Delta e*\{b\})$ are subcomplexes of $\Delta$ and by the primeness of $\Delta$, we obtain that $\tau\cup\{a\},\tau\cup\{b\}\in \Delta$. Then since $\lk_\Delta e=\lk_\Delta a\cap \lk_\Delta b$, we conclude that $\tau\in \lk_\Delta e$, contradicting that $\lk_\Delta e$ is 1-dimensional. Hence we are able to construct a new sphere $\Gamma$ from $\Delta$ by replacing $\st_\Delta e$ with the suspension of $\tau$ (indeed, $\Gamma$ and $\Delta$ differ in a bistellar flip), and therefore $G(\Delta)-e=G(\Gamma)$ is generically 4-rigid. Next we assume that $\lk_\Delta e$ has at least 4 vertices. By \cite[Proposition 2.3]{NN}, the edge contraction $\Delta^{\downarrow e}$ of $\Delta$ is also a homology sphere. Assume that in a $d$-embedding $\psi$ of $G(\Delta)$, both $a$ and $b$ are placed at the origin, $V(\lk_\Delta e)=\{u_1,\dots, u_l\}$, $V(\lk_\Delta a) -V(\lk_\Delta e)=\{v_1,\dots, v_m\}$ and $V(\lk_\Delta b) -V(\lk_\Delta e)=\{w_1,\dots, w_n\}$. The rigidity matrix of $G(\Delta)-e$ can be written as a block matrix
			\[
				M:=\mathrm{Rig}(G(\Delta)-e, \psi)=
				\begin{pmatrix}
				A & B\\
				0 & R
				\end{pmatrix},
		    \]	
	where the columns of $B$ and $R$ correspond to the vertices in $\st_\Delta a\cup\st_\Delta b$, and the rows of $R$ correspond to the edges containing either $a$ or $b$ but not both. For convenience, we write $\mathbf{v_i}$ (resp. $\mathbf{u_i}$, $\mathbf{w_i}$) to represent $\psi(v_i)$ (resp. $\psi(u_i)$ and $\psi(w_i)$). Then
		    \[
		    R=\begin{blockarray}{cccccccccccc}
		    v_1 & \dots & v_m & u_1 & \dots & u_\ell & w_1 & \dots & w_n & a & b\\
		    \begin{block}{(ccccccccccc)c}
		    \bf{v_1} & & & & & & & & &-\bf{v_1} & \\
		    & \ddots & & & & & & & &\vdots & \\
		    & & \bf{v_m} & & & & & & &-\bf{v_m} & \\
		    & & & \bf{u_1} & & & & & &-\bf{u_1} & &(*_1)\\
		    & & & & \ddots & & & & & \vdots & &\vdots\\
		    & & & & & \bf{u_\ell} & & & &-\bf{u_\ell} & &(*_\ell)\\
		    & & & & & & \bf{w_1} & & & &-\bf{w_1} \\
		    & & & & & & & \ddots & & &\vdots \\
		    & & & & & & & & \bf{w_n} & &-\bf{w_n} \\
		    & & & \bf{u_1} & & & & & & &-\bf{u_1} &({**}_1)\\
		    & & & & \ddots & & & & & & \vdots &\vdots\\
		    & & & & & \bf{u_\ell} & & & & &-\bf{u_\ell} &({**}_\ell)\\
		    \end{block}		
		    \end{blockarray},
		    \]
	where the rest of the entries not indicated above are 0.
	We apply the following row and column operations to matrix $M$: first add the last four columns, i.e. columns corresponding to $b$ to the corresponding columns of $a$, then substract row $(*_i)$ from the row $({**}_i)$ for $i=1,\dots,\ell$. This gives
		    \[
		    M'(\psi')=
		    \begin{pmatrix}
		    \mathrm{Rig}(G(\Delta^{\downarrow e}), \psi') & * \\
		     0 & -\bf{u_1} \\
		     \vdots & \vdots \\
		     0 & -\bf{u_l}\\		
		    \end{pmatrix},
		    \]
    where $\psi'$ is the $4$-embedding of $G(\Delta^{\downarrow e})$ induced by $\psi$, where $\psi'(v)=\psi(a)=\psi(b)$ for the new vertex $v$, and $\psi'(x)=\psi(x)$ for all other vertices $x\neq a,b$. Since $\ell=|V(\lk_\Delta e)|\geq 4$, it follows that the last four columns of $M'(\psi')$ are linearly independent. Hence for a generic $\psi'$, \[\mathrm{rank}(M)=\mathrm{rank}(M'(\psi'))=\mathrm{rank}({\mathrm{Rig}(G(\Delta^{\downarrow e}), 4)})+4=(4f_0(\Delta^{\downarrow e})-10)+4=4f_0(\Delta)-10.\]
    Since $4f_0(\Delta)-10$ is the maximal rank that the rigidity matrix of a 4-dimensional framework with $f_0(\Delta)$ vertices can have, and a small generic perturbation of $a$ and $b$ preserves the rank of the rigidity matrix, we conclude that $\mathrm{rank}(\mathrm{Rig}(G(\Delta)-e, 4))=\mathrm{rank}(M)=4f_0(\Delta)-10$. Hence $G(\Delta)-e$ is generically 4-rigid.
\end{proof}
		
	In the following we generalize the previous proposition to the case of $d>4$ by inducting on the dimension and the value of $g_2$. We fix some notation here. If a homology $(d-1)$-sphere $\Delta$ is the connected sum of $n$ prime homology spheres $S_1,\cdots, S_n$, then each $S_i$ is called a \emph{prime factor} of $\Delta$. In particular,  $\Delta$ is called \emph{stacked} if each $S_i$ is the boundary complex of a $d$-simplex. For every stacked $(d-1)$-sphere $\Delta$ with $d\geq 3$, there exists a unique simplicial $d$-ball with the same vertex set as $\Delta$ and whose boundary complex is $\Delta$; we denote it by $\Delta(1)$. We refer to such a ball as a \emph{stacked ball}.
\begin{theorem}\label{thm: rigidity, minus edge}
	Let $d\geq 4$ and let $\Delta$ be a prime homology $(d-1)$-sphere with $g_2(\Delta)>0$. Then for any edge $e\in G(\Delta)$, the graph $G(\Delta)-e$ is generically $d$-rigid.
\end{theorem}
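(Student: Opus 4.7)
The plan is to prove the theorem by double induction on the pair $(d, g_2(\Delta))$. The two base cases are already in place: Proposition~\ref{lm: edge stress g2=1} handles $g_2(\Delta)=1$ for every $d\geq 4$, and Proposition~\ref{lm: edge stress d=4} handles $d=4$ for every admissible $g_2$. So I assume $d\geq 5$ and $g_2(\Delta)\geq 2$, with the theorem known for all prime homology $(d-2)$-spheres with $g_2\geq 1$ and for all prime homology $(d-1)$-spheres with $g_2$ strictly less than $g_2(\Delta)$.

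Set $e=\{a,b\}$. I would first dispose of the case that $e$ is contained in some missing $k$-face of $\Delta$ with $2\leq k\leq d-2$, by a direct appeal to Lemma~\ref{lm: edge in a missing face}. Otherwise, I would upgrade the opening argument of Proposition~\ref{lm: edge stress d=4} from $d=4$ to arbitrary $d$, showing that $\lk_\Delta e = \lk_\Delta a \cap \lk_\Delta b$. Any inclusion-minimal $F\in\lk_\Delta a\cap\lk_\Delta b$ with $F\notin\lk_\Delta e$ would exhibit $F\cup e$ as a missing face of dimension $|F|+1$. The extreme dimensions $|F|+1=d-1$ and $|F|+1=d$ can be ruled out by the primeness of $\Delta$ together with the fact that $\partial(F\cup e)\subseteq\Delta$ would force $\Delta$ to equal the boundary of a $d$-simplex, contradicting $g_2(\Delta)>0$; the remaining dimensions are excluded by the standing case assumption. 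With the equality $\lk_\Delta e=\lk_\Delta a\cap\lk_\Delta b$ secured, \cite[Proposition~2.3]{NN} gives that the edge contraction $\Delta^{\downarrow e}$ is a homology $(d-1)$-sphere, so $G(\Delta^{\downarrow e})$ is generically $d$-rigid by Theorem~\ref{thm: rigidity}.

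The final step splits on the size of $V(\lk_\Delta e)$. When $|V(\lk_\Delta e)|\geq d$, the block matrix argument at the end of Proposition~\ref{lm: edge stress d=4} transfers to dimension $d$ almost verbatim: adding the $d$ columns labeled by $b$ to those labeled by $a$ and subtracting the matching rows uncovers $\mathrm{Rig}(G(\Delta^{\downarrow e}),\psi')$ as a block, together with $|V(\lk_\Delta e)|\geq d$ additional rows whose generic rank in the columns of $a$ is $d$; the total rank therefore reaches the maximum $df_0(\Delta)-\binom{d+1}{2}$. When $|V(\lk_\Delta e)|$ attains its minimum value $d-1$, the link must be the boundary of a $(d-2)$-simplex $\tau$, and the primeness plus $g_2>0$ argument again forces $\tau\notin\Delta$ (and hence $\{a\}\cup\tau,\{b\}\cup\tau\notin\Delta$ by downward closure). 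The bistellar flip exchanging $e*\partial\tau$ with $\partial e*\tau$ then produces a homology $(d-1)$-sphere $\Gamma$ with $G(\Gamma)=G(\Delta)-e$, so Theorem~\ref{thm: rigidity} applied to $\Gamma$ finishes this subcase.

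I expect the main obstacle to be the combinatorial bookkeeping that establishes $\lk_\Delta e=\lk_\Delta a\cap\lk_\Delta b$ and that excludes the ``boundary of a $d$-simplex'' collapse in the minimal-link configuration: both rely on chasing potential missing facets through the primeness and positive-$g_2$ hypotheses. The inductive hypotheses on $d$ and $g_2$ serve as a safety net when any direct reduction is too delicate: one may instead descend to $\lk_\Delta v$ for a well-chosen $v\in V(\lk_\Delta e)$ whose link is prime with positive $g_2$, apply the inductive hypothesis to obtain $G(\lk_\Delta v)-e$ generically $(d-1)$-rigid, and assemble $G(\Delta)-e$ via the cone lemma together with iterated gluing against $G(\st_\Delta w)$ for $w\in V(\Delta)\setminus\{a,b,v\}$ (each generically $d$-rigid by Corollary~\ref{cor: star of face}).
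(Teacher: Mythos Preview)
Your main line is correct and takes a genuinely different route from the paper. The paper's inductive step never tries to lift the contraction/rigidity-matrix computation of Proposition~\ref{lm: edge stress d=4} to higher $d$; instead it bifurcates on whether some vertex link is stacked. When $g_2(\lk_\Delta u)=0$ for some $u$, the paper replaces $\st_\Delta u$ by the stacked ball $(\lk_\Delta u)(1)$, decomposes the resulting sphere into prime factors of strictly smaller $g_2$, and reassembles $G(\Delta)-e$ via gluing and replacement (this is where the induction on $g_2$ is spent). When every vertex link has $g_2\geq 1$, the paper locates $e$ inside a prime factor of some $\lk_\Delta u$ and either applies the $(d-1)$-dimensional inductive hypothesis to that factor or, if the factor is a simplex boundary, performs a local bistellar replacement. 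Your approach instead shows directly that, once $e$ lies in no missing $k$-face with $2\le k\le d-2$, the minimal-missing-face argument forces $\lk_\Delta e=\lk_\Delta a\cap\lk_\Delta b$ (the residual sizes $|F\cup e|\in\{d,d+1\}$ are excluded by primeness and by the pseudomanifold property plus $g_2>0$), after which the edge-contraction rank computation handles $|V(\lk_\Delta e)|\ge d$ and a single bistellar flip handles $|V(\lk_\Delta e)|=d-1$. Notice that this never actually invokes the inductive hypothesis: it is a direct proof for all $d\ge 4$, so the double-induction scaffolding can be dropped. What you gain is brevity and uniformity; what the paper's argument gains is a structural decomposition of $\Delta$ along stacked vertex links.

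One caveat: your ``safety net'' paragraph is not a usable fallback as written, since nothing guarantees that some $v\in V(\lk_\Delta e)$ has $\lk_\Delta v$ prime with positive $g_2$; handling the failure of exactly that hypothesis is what forces the paper's two-claim split. Fortunately your main argument does not need it.
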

\begin{proof}
	The two base cases $g_2(\Delta)=1, d\geq 4$ and $d=4, g_2(\Delta)\geq 1$ are proved in Proposition \ref{lm: edge stress g2=1} and \ref{lm: edge stress d=4} respectively. Now we assume that the statement is true for every prime homology $(d_0-1)$-sphere $S$ with $5\leq d_0\leq d$ and $1\leq g_2(S)< g_2(\Delta)$ and every edge $e\in S$. The result follows from the following two claims. 
\end{proof}
\begin{claim}
	Under the above assumptions, if, furthermore, $g_2(\lk_\Delta u)=0$ for some vertex $u\in V(\Delta)$, then $G(\Delta)-e$ is generically $d$-rigid for any edge $e\in \Delta$.
\end{claim}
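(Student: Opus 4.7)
The plan is to pass from $\Delta$ to a smaller homology $(d-1)$-sphere $\Gamma$ by a local modification at $u$ and then apply the inductive hypothesis of Theorem~\ref{thm: rigidity, minus edge} to $\Gamma$. First I would show $\deg u\geq d+1$. Since $g_2(\lk_\Delta u)=0$, Kalai's characterization of the equality case in the Lower Bound Theorem yields that $\lk_\Delta u$ is a stacked $(d-2)$-sphere. If $\deg u=d$, then $\lk_\Delta u=\partial\Delta^{d-1}$, and primeness of $\Delta$ (all proper subsets of $V(\lk_\Delta u)$ are faces of $\Delta$) forces $V(\lk_\Delta u)$ to be a facet of $\Delta$; a standard pseudomanifold argument on the resulting copy of $\partial\Delta^d$ then gives $\Delta=\partial\Delta^d$, contradicting $g_2(\Delta)>0$.

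Next, let $B$ be the unique stacked $(d-1)$-ball with boundary $\lk_\Delta u$, and set $\Gamma:=\as_\Delta u\cup B$. A direct edge count shows that $B$ and its boundary $\lk_\Delta u$ have the same number of edges, and since $\lk_\Delta u\subseteq\as_\Delta u$ as subcomplexes, we obtain $E(B)=E(\lk_\Delta u)\subseteq E(\as_\Delta u)$ and hence $G(\Gamma)=G(\as_\Delta u)$ as graphs. By a Mayer--Vietoris argument, $\Gamma$ is a homology $(d-1)$-sphere on $f_0(\Delta)-1$ vertices with $f_1(\Gamma)=f_1(\Delta)-\deg u$, so that $g_2(\Gamma)=g_2(\Delta)-(\deg u-d)<g_2(\Delta)$.

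I would then split on whether $u\in e$. If $u\in e$, write $e=\{u,v\}$: then $G(\Delta)-e$ is obtained from the generically $d$-rigid graph $G(\Gamma)=G(\as_\Delta u)$ (rigid by Theorem~\ref{thm: rigidity} applied to $\Gamma$) by adjoining the vertex $u$ together with the $\deg u-1\geq d$ edges from $u$ to $V(\lk_\Delta u)\setminus\{v\}$, which preserves generic $d$-rigidity by the standard vertex-extension argument used at the end of Proposition~\ref{lm: edge stress g2=1}. If $u\notin e$, then $e$ is already an edge of $\Gamma$; granted that $G(\Gamma)-e$ is generically $d$-rigid, I would use Corollary~\ref{cor: star of face} to get $G(\st_\Delta u)$ generically $d$-rigid and glue it to $G(\Gamma)-e$ along $G(\lk_\Delta u)$ (an intersection on $\deg u>d$ vertices) via the gluing lemma when $e\notin\lk_\Delta u$, or via Lemma~\ref{lm: replacement lemma} (taking $a,b$ to be the endpoints of $e$) when $e\in\lk_\Delta u$, yielding $G(\Delta)-e=G(\st_\Delta u)\cup(G(\Gamma)-e)$ generically $d$-rigid.

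The principal obstacle is therefore establishing that $G(\Gamma)-e$ is generically $d$-rigid for every edge $e$ of $\Gamma$. When $\Gamma$ is prime and $g_2(\Gamma)\geq 1$, the inductive hypothesis applies directly. Otherwise I would decompose $\Gamma=S_1\#\cdots\#S_k$ as a connected sum of prime homology spheres; since $g_2$ is additive under connected sum, each $g_2(S_i)<g_2(\Delta)$, and one combines the inductive hypothesis on the nonstacked factors with Theorem~\ref{thm: rigidity} on the stacked ones via repeated application of the gluing and replacement lemmas. The most delicate corner is the case $g_2(\Gamma)=0$, in which $\Gamma$ is entirely stacked and $G(\Gamma)-e$ itself fails to be $d$-rigid; here one must exploit the surplus $\deg u-d=g_2(\Delta)\geq 2$ edges at $u$ to directly recover rigidity of $G(\Delta)-e$ via a rigidity-matrix rank computation in the spirit of the proof of Proposition~\ref{lm: edge stress d=4}.
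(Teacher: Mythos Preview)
Your construction of $\Gamma=\as_\Delta u\cup B$ and the reduction of the case $u\notin e$ to rigidity of $G(\Gamma)-e$ matches the paper's setup, and your treatment of the sub-case where $e$ lies in a prime factor $S_i$ of $\Gamma$ with $g_2(S_i)\geq 1$ is essentially the paper's Case~1. The case $u\in e$ is also fine.

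The genuine gap is the ``delicate corner'', and it is broader than you state. The problem is not only $g_2(\Gamma)=0$: whenever $e$ lies in a prime factor $S_i$ of $\Gamma$ that is the boundary of a $d$-simplex (this can and does happen even when $g_2(\Gamma)\geq 1$), or when $e\in\lk_\Delta u$, the graph $G(\Gamma)-e$ is \emph{not} generically $d$-rigid, and no combination of the gluing and replacement lemmas applied to the prime factors of $\Gamma$ alone will repair this. (Note also that when $e\in\lk_\Delta u$ your claimed identity $G(\Delta)-e=G(\st_\Delta u)\cup(G(\Gamma)-e)$ is false, since $e\in G(\st_\Delta u)$.) Your fallback of a rank computation ``in the spirit of Proposition~\ref{lm: edge stress d=4}'' is not a plan: that argument rests on an edge contraction producing a homology sphere, and there is no analogous contraction here.

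The paper's resolution is a different idea that brings the vertex $u$ back into play. For each missing facet $\tau$ of $\lk_\Delta u$ it groups the prime factors $S_1,\dots,S_k$ of $\Gamma$ containing $\tau$, together with the edges from $u$ to $\tau\cup\{v_0,v_k\}$, into a subgraph $G_\tau$, and writes $G(\Delta)-e=\bigcup_\tau(G_\tau-e)$. In the problematic cases (stacked $S_i$, $e\in\lk_\Delta u$, or $u\in e$) one has $e\in G_\tau':=G(\tau*C)$ where $C$ is the cycle $(u,v_0,\dots,v_k)$; since $\tau*C$ is the graph of a prime sphere with $g_2=1$, Proposition~\ref{lm: edge stress g2=1} gives $G_\tau'-e$ generically $d$-rigid, and one then rebuilds $G_\tau-e$ from $G_\tau'-e$ by swapping each $\{v_{i-1},v_i\}$ for the corresponding non-simplex $S_i$ via Lemma~\ref{lm: replacement lemma}. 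This use of the cycle through $u$ is the missing ingredient in your outline.
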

\begin{proof}
	Since $\lk_\Delta u$ is at least 3-dimensional and since $g_2(\lk_\Delta u)=0$, it follows that $\lk_\Delta u$ is a stacked sphere. Also since $\Delta$ is prime, the interior faces of the stacked ball $(\lk_\Delta u)(1)$ are not faces of $\Delta$ (or otherwise such a face together with $u$ will form a missing facet of $\Delta$). Let \[\Gamma:=(\Delta\backslash\{u\})\cup (\lk_\Delta u)(1).\] Then $\Gamma$ is a homology $(d-1)$-sphere but not necessarily prime. (For more details on this and similar operations, see \cite{Z}.)  Also by the primeness of $\Delta$, every missing facet $\sigma$ of $\Gamma$ must contain a missing facet of $\lk_\Delta u$. Pick a missing facet $\tau$ of $\lk_\Delta u$ and assume that there are $k$ prime factors of $\Gamma$ that contain $\tau$. We first find two facets of $(\lk_\Delta u)(1)$ that contain $\tau$ and say they are $\{v_0\}\cup\tau$ and $\{v_k\}\cup\tau$. Now assume that the $k$ prime factors of $\Gamma$ are $S_{1}, S_{2},\dots, S_{k}$, and each of them satisfies $S_i\cap S_{i+1}=\tau\cup\{v_i\}$ for some other vertices $v_1,\dots, v_{k-1}\in \Delta $ and $1\leq i\leq k-1$. Furthermore, $S_j\cap (\lk_\Delta u)(1)=\tau\cup\{v_j\}$ for $j=0, k$. Let $G_\tau:=E \cup(\cup_{i=1}^{k} G(S_i))$, where $E$ is the set of edges connecting $u$ and the vertices in $\tau\cup \{v_0,v_k\}$. Since an arbitrary edge $e$ of $G(\Delta)$ either contains $u$ or belongs to one of $S_i$'s, it follows that $G(\Delta)-e=\cup (G_{\tau}-e)$, where the union is taken over all missing facets $\tau$ of $\lk_\Delta u$. By the gluing lemma, it suffices to show that $G_\tau -e$ is generically $d$-rigid for any $\tau$ and edge $e\in G_\tau$. We consider the following two cases:
\begin{figure}[h]
			\centering
			\subfloat[The subgraph $G_\tau$: fixing a missing facet $\tau$ of $\lk_\Delta u$, there are three corresponding prime factors $S_1, S_2,S_3$. Here $S_2$ is the boundary complex of the 3-simplex.]{\includegraphics[scale=0.7]{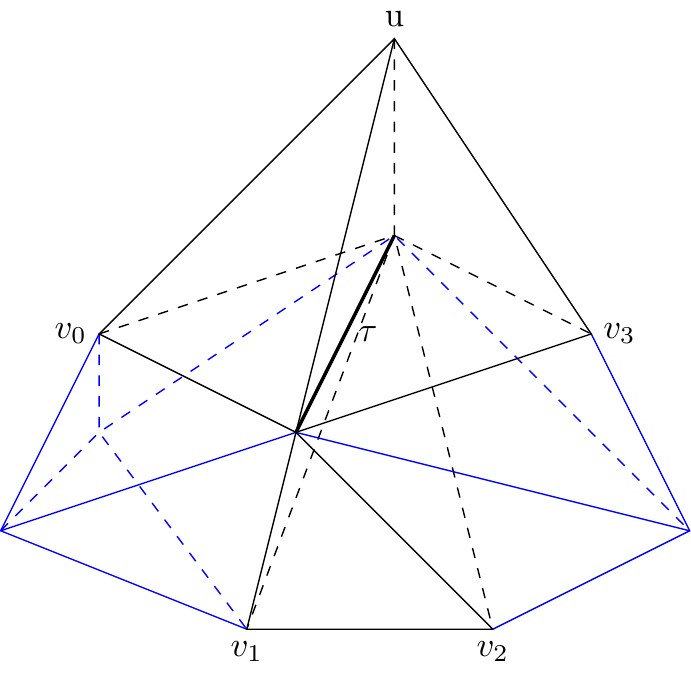}}
			\hspace{20mm}
			\subfloat[$G_\tau'$, obtained from $G_\tau$ by replacing all the blue edges in $G_\tau$ with the red edges $\{v_0,v_1\}$ and $\{v_2,v_3\}$. ]{\includegraphics[scale=0.7]{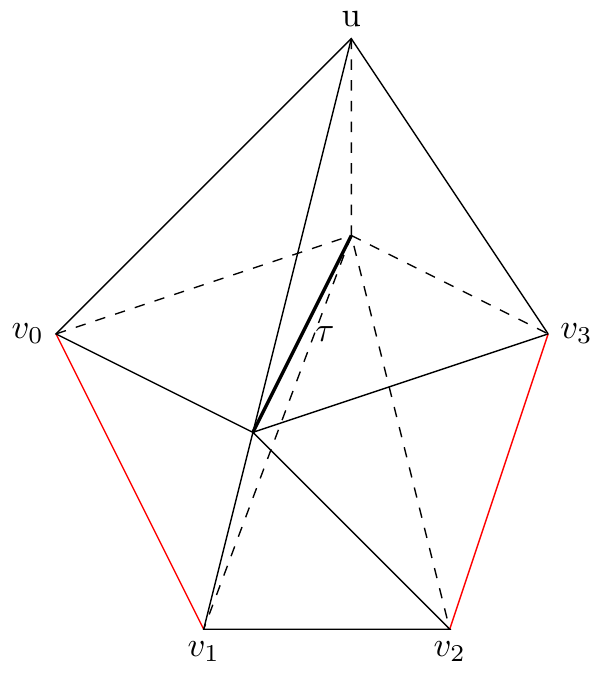}}
			\caption{The corresponding graphs $G_\tau$ and $G_\tau'$, given the graph $G=G(\Delta)$ and a missing facet in a vertex link.}
\end{figure}
		
	{\bf{Case 1}}: $e\in S_i$ for some $i$, $S_i$ is not the boundary complex of the $d$-simplex, and $e\notin S_j$ for any other $j\neq i$. Since $G(S_i)$ is a generically $d$-rigid subgraph of $\Gamma$, it follows that \[g_2(S_i)\leq g_2(\Gamma)=g_2(\Delta)-f_0(\lk_\Delta u)+d<g_2(\Delta).\] Furthemore, by the inductive hypothesis on $g_2$, $G(S_i)-e$ is generically $d$-rigid for any edge $e\in \Delta$. Also since $G(S_i)$ is the induced subgraph of $G_\tau$ on $V(S_i)$, by the replacement lemma, $G_\tau -e$ is generically $d$-rigid.
		
	{\bf{Case 2}}: either $e\in S_i$ for some $i$ and $S_i$ is the boundary complex of the $d$-simplex (in this case the edge $\{v_{i-1}, v_i\}\in S_i$), or $e\in \lk_\Delta u$, or $u\in e$. Hence $e\in G_{\tau}':=G(\tau*C)$, where $C$ is the cycle graph $(u, v_0,\dots, v_k)$. By Lemma \ref{lm: edge stress g2=1}, $G_{\tau}' -e$ is generically $d$-rigid for any edge $e$. The graph $G_\tau-e$ can be recovered from $G_{\tau}'-e$ by replacing each edge $\{v_{i-1},v_i\}$ with the edges in $S_i\backslash G_{\tau}'$ whenever $S_i$ is not the boundary complex of the $d$-simplex. Note that nothing needs to be done when $S_i$ is the boundary complex of a simplex, since $S_i$ is already a subcomplex of $G_\tau'$. (See Figure 1 for an illustration in a lower dimension case.) Repeatedly applying Lemma \ref{lm: replacement lemma} with $\{a,b\}=\{v_{i-1}, v_i\}$, $G_1= G(S_i)-e$ and $G_2+\{a,b\}=G_{\tau}' -e$, we conclude that $G_\tau -e$ is also generically $d$-rigid.
\end{proof}
\begin{claim}
	Under the above assumption, if, furthermore, every vertex link of $\Delta$ has $g_2\geq 1$, then $G(\Delta)-e$ is generically $d$-rigid for any edge $e\in \Delta$. 
\end{claim}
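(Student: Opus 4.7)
The plan is to generalize the strategy of Proposition~\ref{lm: edge stress d=4} from $d = 4$ to arbitrary $d \geq 5$. Write $e = \{a, b\}$. If $e$ lies in some missing face of $\Delta$, then by primeness this missing face has dimension between $2$ and $d - 2$, and Lemma~\ref{lm: edge in a missing face} finishes the argument. So assume $e$ belongs to no missing face of dimension at least $2$. The derivation of $\lk_\Delta e = \lk_\Delta a \cap \lk_\Delta b$ given in the proof of Proposition~\ref{lm: edge stress d=4} only uses primeness together with the absence of missing $k$-faces through $e$ for $k \geq 2$, so it carries over verbatim for arbitrary $d$. Since $\lk_\Delta e$ is a homology $(d-3)$-sphere with $d - 3 \geq 2$, we have $|V(\lk_\Delta e)| \geq d - 1$, and I would split according to whether this bound is tight.

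Suppose first that $|V(\lk_\Delta e)| = d - 1$, so $\lk_\Delta e = \partial \tau$ for the $(d-2)$-simplex $\tau$ on $V(\lk_\Delta e)$. The plan in this sub-case is to perform a bistellar flip on $e$, and the main obstacle is verifying $\tau \notin \Delta$. If $\tau \in \Delta$, then every proper subset of $\tau \cup \{a\}$ lies in $\Delta$: subsets of $\tau$ are in $\Delta$ either by assumption or because they lie in $\partial \tau \subseteq \Delta$, while subsets of the form $\tau' \cup \{a\}$ with $\tau' \subsetneq \tau$ lie in $\Delta$ because $\tau' \in \lk_\Delta e$ yields $\tau' \cup \{a, b\} \in \Delta$. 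Primeness of $\Delta$ then forces $\tau \cup \{a\} \in \Delta$ and, symmetrically, $\tau \cup \{b\} \in \Delta$, so $\tau \in \lk_\Delta a \cap \lk_\Delta b = \lk_\Delta e$; this contradicts $|\tau| = d - 1$ exceeding the face-size bound of $\lk_\Delta e$. Granted $\tau \notin \Delta$, the bistellar flip replacing $\st_\Delta e = e * \partial \tau$ with $\tau * \partial e$ produces a homology $(d-1)$-sphere $\Gamma$ with $G(\Gamma) = G(\Delta) - e$ (every edge of $\tau * \partial e$ already lies in $\Delta$, its endpoints being in $V(\lk_\Delta e) \cup \{a, b\}$, and $\{a, b\}$ itself is not an edge of $\tau * \partial e$), so Theorem~\ref{thm: rigidity} completes this sub-case.

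If instead $|V(\lk_\Delta e)| \geq d$, the link condition ensures by \cite[Proposition 2.3]{NN} that $\Delta^{\downarrow e}$ is a homology $(d-1)$-sphere, so $G(\Delta^{\downarrow e})$ is generically $d$-rigid by Theorem~\ref{thm: rigidity}. I would then repeat the rigidity-matrix computation of Proposition~\ref{lm: edge stress d=4} with $4$ replaced by $d$: placing $a$ and $b$ at the origin in a generic $d$-embedding, adding the $d$ columns for $b$ to those for $a$, and for each $w \in V(\lk_\Delta e)$ subtracting row $\{a, w\}$ from row $\{b, w\}$ yields a block-upper-triangular matrix with $\mathrm{Rig}(G(\Delta^{\downarrow e}), \psi')$ in the upper left and a residual $|V(\lk_\Delta e)| \times d$ block (with rows of the form $\pm \psi(w)$) in the $b$-columns below. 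Since $|V(\lk_\Delta e)| \geq d$, this residual block is generically of full rank $d$, pushing the total rank to the maximum $d f_0(\Delta) - \binom{d+1}{2}$, so $G(\Delta) - e$ is generically $d$-rigid.
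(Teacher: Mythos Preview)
Your argument is correct, but it takes a genuinely different route from the paper's. The paper proves Claim~3.6 by working inside a vertex link: it picks $u$ with $e\in\lk_\Delta u$, looks at the prime factor $S_i$ of $\lk_\Delta u$ containing $e$, and applies the inductive hypothesis on dimension when $g_2(S_i)>0$; when every such $S_i$ is a simplex boundary it falls back on the bistellar flip. You instead lift the entire strategy of Proposition~\ref{lm: edge stress d=4} to arbitrary $d$: handle the missing-face case via Lemma~\ref{lm: edge in a missing face}, the case $|V(\lk_\Delta e)|=d-1$ via a bistellar flip, and the case $|V(\lk_\Delta e)|\geq d$ via edge contraction and the vertex-splitting rank computation. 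A notable consequence is that your proof uses neither the hypothesis that all vertex links have $g_2\geq 1$ nor the inductive hypothesis of Theorem~\ref{thm: rigidity, minus edge}; it therefore yields a direct proof of Theorem~\ref{thm: rigidity, minus edge} that bypasses the split into Claims~3.5 and~3.6 (as well as Proposition~\ref{lm: edge stress g2=1}) altogether. The paper's approach, by contrast, avoids carrying the explicit rigidity-matrix calculation beyond $d=4$ at the cost of the two-parameter induction.

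Two small points to tighten. First, ``by primeness this missing face has dimension between $2$ and $d-2$'' is not quite right: primeness excludes missing $(d-1)$-faces, but to exclude a missing $d$-face you need $\Delta\neq\partial\sigma^d$, which follows from $g_2(\Delta)>0$ (a subcomplex of a homology $(d-1)$-sphere isomorphic to $\partial\sigma^d$ must be the whole sphere). Second, the $d=4$ derivation of $\lk_\Delta e=\lk_\Delta a\cap\lk_\Delta b$ does not carry over \emph{verbatim}: the paper's argument treats vertices, edges, and then rules out $2$-faces in a way specific to $d=4$. The correct general statement is that the link condition is equivalent to ``$e$ lies in no missing face of dimension $\geq 2$'', and the proof is a short induction on face size; this is routine, but you should say it rather than invoke the $d=4$ paragraph.
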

\begin{proof}
	Assume that there is a vertex $u\in \Delta$ such that $\lk_\Delta u$ is the connected sum of prime factors $S_1,\dots,S_k$ and $e=\{v,w\}\in S_i$ for some $i$. If $e$ is an edge in a missing facet of $\lk_\Delta u$ (which is also a missing $(d-2)$-face of $\Delta$), then by Lemma \ref{lm: edge in a missing face}, $G(\Delta)-e$ is generically $d$-rigid. 
		
	Otherwise, assume first that $g_2(S_i)\neq 0$. Then $G(S_i)-e$ is generically $(d-1)$-rigid by the inductive hypothesis on the dimension. Hence by the gluing lemma and cone lemma, we obtain that $G(\st_\Delta u)-e$ is generically $d$-rigid. By the replacement lemma, $G(\Delta)-e$ is generically $d$-rigid.
		
	Finally, assume that $S_i$ is the boundary complex of a $(d-1)$-simplex, or equivalently, $\lk_\Delta \{u,v,w\}$ is the boundary complex of a $(d-3)$-simplex. If furthermore for any vertex $x\in \lk_\Delta e$, the link of $\{x,v,w\}$ in $\Delta$ is the boundary complex of $(d-3)$-simplex, then $\lk_\Delta e$ must be the boundary complex of a $(d-2)$-simplex. Hence $\lk_\Delta v$ is obtained by adding a pyramid over a facet $\sigma$ of some $(d-2)$-sphere, and $w$ is the apex of the pyramid. Now we construct a new homology $(d-1)$-sphere $\Delta'$ as follows: first delete the edge $e$ from $\Delta$, then add the faces $\sigma$, $\sigma\cup\{v\}$ and $\sigma\cup\{w\}$ to $\Delta$. It follows that $G(\Delta')=G(\Delta)-e$, which implies that $G(\Delta)-e$ is generically $d$-rigid. 
		
	Otherwise, there exists a vertex $x$ such that $\lk_\Delta \{x,v,w\}$ is not the boundary complex of $(d-3)$-simplex. Then we may show that $G(\Delta)-e$ is generically $d$-rigid by applying the same argument as above on $\lk_\Delta x$. This proves the claim.
\end{proof}
\section*{Acknowledgements}
	The author was partially supported by a graduate fellowship from NSF grant DMS-1361423. I thank Isabella Novik for helpful comments and discussions.
	\bibliographystyle{amsplain}
	
	\end{document}